\newcommand{\val}[1]{\llbracket #1 \rrbracket}
\newcommand{\neqrel}{\gg}
\newcommand{\varkur}{{\rm Kur}_{\boxdot \neq }}
\newtheorem{theorem}{Theorem}[section]
\theoremstyle{definition}
\newtheorem{definition}[theorem]{Definition}
\newtheorem{lemma}[theorem]{Lemma}
\newtheorem{fun-fact}[theorem]{Fun-Fact}
\newtheorem{remark}[theorem]{Remark}
\title{Taming the `elsewhere':\\
On expressivity of topological languages}
\author{%
David Fern\'andez-Duque\\
Department of Mathematics WE16, Ghent University\\
{\tt   David.FernandezDuque@UGent.be}
}
\begin{document}

\maketitle

\begin{abstract}
In topological modal logic, it is well known that the Cantor derivative is more expressive than the topological closure, and the `elsewhere,' or `difference,' operator is more expressive than the `somewhere' operator.
In 2014, Kudinov and Shehtman asked whether the combination of closure and elsewhere becomes strictly more expressive when adding the Cantor derivative.
In this paper we give an affirmative answer: in fact, the Cantor derivative alone can define properties of topological spaces not expressible with closure and elsewhere.
To prove this, we develop a novel theory of morphisms which preserve formulas with the elsewhere operator.
\end{abstract}

\section{Introduction}

{\em Topology} can be described as the qualitative study of space, and as such it is not surprising that it often serves as the foundation for spatial reasoning.
One way to think about topological spaces is as pairs $(X,{\bm i})$ consisting of a set $X$ and an operator ${\bm i} \colon \wp(X)\to \wp(X)$ satisfying ${\bm i} X = X$, ${\bm i}(A\cap B) = {\bm i}A \cap {\bm i}B$, and ${\bm i}  A \subseteq A \cap {\bm i}{\bm i} A $.
The set ${\bm i} A$ is the {\em interior} of $A$, and the intuition is that it should not contain points on the {\em boundary} of $A$: if we think of $A$ as an orange, we can think of ${\bm i}A$ as the same orange without its peel.
If $A = {\bm i}A$, we say that $A$ is {\em open;} open sets are those that do not contain any of their boundary points.
The interior operator admits a dual `closure' operator, denoted, $\bm c$, defined by ${\bm c}A = X\setminus {\bm i}(X\setminus A)$; if $A$ is now the peeled orange, ${\bm c}A$ would be the original orange with its peel.

McKinsey and Tarski~\cite{mctarski} already observed that the topological closure and interior could be used to provide semantics for modal logic.
In this setting, a proposition $\varphi$ is interpreted as a region $\val\varphi\subseteq X$, and $\boxdot\varphi$ is interpreted as its interior ${\bm i}\val\varphi$ (see Section \ref{SecSem} for formal definitions).
If we moreover add a universal modality $\forall$, where $\forall\varphi$ is true iff $\varphi$ covers the entire space $X$ \cite{ShehtmanEverywhereHere}, then we obtain a modal framework in which all spatial relations of $\sf RCC8$ may be expressed \cite{Wolter00}: for example, $\forall(p\to \boxdot q)$ states that the region $\val p$ is non-tangentially contained within the region $\val q$.

However, the closure and universal modality are not the only primitive operations one can use for this style of spatial reasoning.
McKinsey and Tarski~\cite{mctarski} also noted that the Cantor derivative gives rise to an alternative interpretation of modal logic.
For $A\subseteq X$, we define ${\bm d}A$ to be the set of points $x\in X$ such that $x\in {\bm c}(A\setminus \{x\})$; ${\bm d}A$ is also called the set of {\em limit points} of $A$.
Dually, we may define $x\in {\bm p}A $ if $x\in {\bm i}(A\cup\{x\}) $: this is the set of points that have a {\em punctured neighborhood} contained in $A$.
Modal logic based on the Cantor derivative does not validate the reflexivity property $\Box p\to p$, making it an attractive model of {\em belief,} rather than knowledge.
Logics of the Cantor derivative have been studied extensively (see e.g.~\cite{beg,beg2010,Lucero-Bryan11}), particularly in the context of {\em scattered spaces,} which have applications to the logic of provability \cite{abashidze1985,AbaBlass,blass1990}.
%This makes it an attractive model for {\em belief} (rather than knowledge), and in fact the Cantor derivative is the basis for the topological semantics of the logic of provability \cite{}.

The universal modality also comes with a `punctured' variant in the {\em elsewhere,} or {\em difference,} modality $[\neq]$ \cite{Gargov1993}, studied in a topological context in e.g.~\cite{Gabelaia2001,Kudinov2006}.
Here, $[\neq]\varphi$ holds on $x$ if $\varphi$ is true in every point, aside from possibly $x$.
There are some compelling reasons for considering the punctured variants as primitive, rather than the `unpunctured' ones: for example, we may easily define $\forall\varphi \equiv \varphi\wedge [\neq]\varphi$.
In fact, the punctured variants are {\em strictly more expressive.}

To make this precise, note that each $M\subseteq \{\Box,\boxdot,[\neq],\forall\}$ gives rise to a propositional modal language $\mathcal L_M $ (where we omit set-brackets and also brackets around $\neq$; see Section \ref{SecSem}).
For example, $\mathcal L_{\Box\neq}$ denotes the language with modalities $\Box$ and $[\neq]$; all modalities are definable in terms of these, so it is rarely useful to consider more than two modalities at once.
One question that arises when desigining a language for formal topological reasoning is how the different combinations of modalities compare with respect to expressive strength.
A general analysis of the situation is given in \cite{KS} (see Section \ref{SecRel}).

In particular, it is known (and follows from the above discussion) that $\mathcal L_{\boxdot\neq}$ is reducible to $\mathcal L_{\Box \neq }$, in the sense that any class of topologies definable in the former is already definable in the latter (see Section \ref{SecRel}).
Kudinov and Shehtman ask whether the reduction is {\em strict,} in the sense that converse fails.
The aim of this paper is to give an affirmative answer to the latter.
In fact, we show that already $\mathcal L_\Box$ cannot be reduced to $\mathcal L_{\boxdot\neq}$.

We prove this by considering a variant of the {\em local $1$-componency} property, which states that if $x\in U$ and $U$ is open, then there is a neighborhood $N\subseteq U$ of $x$ such that $N\setminus \{x\}$ is connected.
This property holds on e.g.~$\mathbb R^2$ but not on $\mathbb R$, and it implies the validity of the formula ${\rm Kur}:=\Box(\boxdot p\vee \boxdot \neg p)\to \Box p \vee \Box \neg p$.
In Section \ref{secDefKur}, we show that ${\rm Kur}$ itself is definable in $\mathcal L_{\boxdot\neq}$, in the sense that there is a formula $\varkur $ of this language such that for any topological space $X$, $X\models{\rm Kur}$ if and only if $X\models\varkur$.
However, in Section \ref{sectIndef} we also show that a mild variant of this property, $\Box{\rm Kur}$, is not definable in terms of $\boxdot$ and $[\neq]$.
Intuitively, $\Box{\rm Kur}$ states that local $1$-componency may only fail on a discrete set of points.
To prove that this formula is not expressible in $\mathcal L_{\boxdot\neq}$, in Section \ref{secMorph} we first introduce a new notion of morphism for the difference modality.

\section{Topological spaces}

In this section we briefly recall some background from topology, particularly the notion of Cantor derivative on a topological space.

\begin{definition} (\emph{topological space})
A topological space is a pair $(X,\tau)$, where $X$ is a set and $\tau$ is a subset of $\wp(X)$ that satisfies the following conditions:
\begin{itemize}
\item $X,\varnothing\in\tau$;
\item if $U,V\in\tau$, then $U\cap V\in\tau$;
\item if $\mathcal{U}\subseteq \tau$, then $\bigcup\mathcal{U}\in\tau$.
\end{itemize}
The elements of $\tau$ are called {\em open sets,} and the complement of an open set is called a \emph{closed} set. 
\end{definition}

We will notationally identify $X$ with $(X,\tau)$, which in this paper will not lead to ambiguity as all topologies we consider come from the Euclidean spaces $\mathbb R^n$ or their subspaces.
Perhaps the most familiar example of a topological space is the real line $\mathbb R$, where $U\subseteq \mathbb R$ is open iff it is a (possibly infinite) union of intervals of the form $(a,b)$.
More generally, each space $\mathbb R^n$ comes with a topology where $U$ is open iff whenever $x\in U$, there is $\varepsilon>0$ such that $\|x-y\|<\varepsilon$ implies that $y \in U$ (where $\|\cdot\|$ is the standard Euclidean norm).
Each space $\mathbb R^n$ is connected: recall that if $X$ is a topological space and $C\subseteq X$, we say that $C$ is {\em connected} if whenever $C\subseteq A\cup B$ with $A,B$ disjoint and open, it follows that $C\cap A =\varnothing$ or $C\cap B = \varnothing$.

If $X,Y$ are topological spaces, a function $f\colon X\to Y$ is {\em continuous} if $f^{-1}(B)$ is open whenever $B\subseteq Y$ is open, and {\em open } if $f (A)$ is open whenever $A\subseteq X$ is open.
A continuous and open map is an {\em interior map.}

The fundamental operation on topological spaces we are interested in is the {\em Cantor derivative.}

\begin{definition}(\emph{Cantor derivative}) Let $ (X,\tau)$ be a topological space. Given $S\subseteq X$, the \emph{Cantor derivative} ${\bm d} $ of $S$ is the set ${\bm d} S $ of all limit points of $S$, i.e. 
$ x\in  {\bm d} S $ if and only if, whenever $x\in U\in \tau $, it follows that $ (U\cap S)\backslash\{x\}\neq\varnothing$.
\end{definition}

When working with more than one topological space, we may denote the Cantor derivative of the topological space $X$ by ${\bm d}_X $.
Given $A,B\subseteq X$, it is not hard to check that the Cantor derivative satisfies ${\bm d} \varnothing =\varnothing$, ${\bm d}(A\cup B)= {\bm d} A \cup {\bm d} B $, and ${\bm d}{\bm d} A  \subseteq A\cup {\bm d} A $.

The {\em topological closure} of $A\subseteq X$ can then be defined as ${\bm c}A = A \cup {\bm d}A$, or, directly, as the intersection of all closed sets containing $A$.
Both the Cantor derivative and topological closures admit duals, the {\em punctured interior} given by ${\bm p}A = X\setminus {\bm d}(X\setminus A)$ and the {\em interior} ${\bm i}A = X\setminus {\bm c}(X\setminus A)$.
The latter is also definable as the union of all open sets contained in $A$, while the former has the property that $x\in {\bm p}A$ iff there is some open set $U$ such that $x\in U\subseteq A\cup \{x\}$.
We say that $A$ contains a {\em punctured neighborhood} of $x$.

\section{Topological languages}\label{SecSem}

All languages we consider will be subsets of the full topological language $\mathcal L_*$ given by the following syntax in Backus-Naur form:
\[\varphi,\psi :: =   \ p \  \mid \ \neg \varphi \ \mid \ \varphi\wedge\psi \ \mid \ \Box \varphi\ \mid \ \boxdot \varphi \ \mid  \
[{\neq}]\varphi\ \mid \ \forall \varphi\]
Sublanguages are indicated with allowed modalities as subindices. We are mostly concerned with $\mathcal L_\Box$ and $\mathcal L_{\boxdot{\neq}}$ (we omit brackets around $\neq$).
As usual, we use $\Diamond$ as a shorthand for $\neg\Box\neg$ and $\langle \neq\rangle$ as a shorthand for $\neg[\neq]\neg$.

\begin{definition}\label{d-semantics} 
A \emph{topological model} is a pair $\mathcal {M}=(X,\val\cdot)$ where $X$ is a topological space and $\val\cdot\colon\mathcal L_* \rightarrow \wp(X)$ is a valuation function assigning a subset of $X$ to each formula of $\mathcal L_*$, satisfying the following recursive clauses:  
\begin{itemize}

	\item $\val{\neg \varphi} = X \setminus \val\varphi $,
	\item $\val{ \varphi\wedge\psi } = \val \varphi \cap \val \psi $,
	\item $\val{\Box\varphi} = {\bm p}\val\varphi $,
		\item $\val{\boxdot\varphi} = {\bm i}\val\varphi $,
		%\item $\| \bc\varphi \| = f^{-1}(\| \varphi\|)$.
%		\item $x\models_d[*]\varphi\iff y\models\varphi$ for $y=f^n(x)$ for all $n\ge 1$;
		\item $\val{[\neq]\varphi} = \{x\in X: \val\varphi \cup \{x\} =X\}$, and
\item $\val{\forall\varphi} = X$ if $\val\varphi =X$, otherwise $\val{\forall \varphi} =\varnothing$.
\end{itemize}
\end{definition}

Note that, dually, $x\in \val{\langle\neq\rangle\varphi}$ if and only if there is $x'\neq x$ such that $x'\in \val\varphi$.
We write $(\mathfrak  M,x)\models\varphi$ if $x\in \val\varphi$, and $\mathfrak  M\models\varphi$ if $\val\varphi = X$.
Similarly, $X\models\varphi$ if $(X,\val\cdot)\models \varphi$ for every valuation $\val\cdot$ on $X$, and $(X,x)\models\varphi$ if $x\in \val \varphi$ for every valuation $\val\cdot$.
We may write $\val\cdot_X$ instead of $\val\cdot$ when working with more than one topological space.

\section{Reducibility between languages}\label{SecRel}

The general type of question we are concerned with is: {\em Given $\mathcal L,\mathcal L'\subseteq \mathcal L_*$, is $\mathcal L'$ reducible to $\mathcal L $?}
Intuitively, $\mathcal L'$ is reducible to $\mathcal L $ if every $\mathcal L'$-definable class of spaces is also $\mathcal L$-definable.
Let us make this precise.

\begin{definition}
Given $\varphi\in \mathcal L_*$, let $\mathcal C(\varphi)$ be the class of all topological spaces $X$ such that $X\models\varphi$.
Say that a class of spaces $\Omega$ is {\em definable} in $\mathcal L \subseteq \mathcal L_*$ if there is $\varphi\in\mathcal L$ such that $\Omega = \mathcal C(\varphi)$, and similarly, say that a formula $\psi$ is definable in $\mathcal L$ if there is $\psi'\in\mathcal L$ such that $\mathcal C(\psi') = \mathcal C(\psi )$.

Then, write $\mathcal L'\leq\mathcal L $ if every $\varphi\in \mathcal L'$ is definable in $\mathcal L $, and $\mathcal L'<\mathcal L$ if $\mathcal L'\leq \mathcal L$ but $\mathcal L\not\leq\mathcal L'$.
In this case, we say that $\mathcal L'$ is {\em (strictly) reducible to $\mathcal L$.}
\end{definition}

Kudinov and Shehtman~\cite{KS} give an overview of the known facts about reducibility between sublanguages of $\mathcal L'$.
The general picture is represented in Figure \ref{figLanguages}.
The non-strict inclusions all follow from the definability of $\boxdot$ in terms of $\Box$ and $\forall$ in terms of $[\neq]$, which we have already discussed.
Languages obtained by adding $\forall$ to $\Box$ or $\boxdot$ are more expressive because e.g.~$\forall$ is needed to define connectedness \cite{ShehtmanEverywhereHere}, and languages obtained by replacing $\forall$ with $[\neq]$ are more expressive because $[\neq]$ is needed to distinguish between e.g.~the real line and the circle \cite{KS}.
That no language with $\Box$ is reducible to one without $\Box$ follows from results in the current paper, but was already known for languages without $\neq$ since $\mathcal L_\boxdot$ cannot distinguish between $\mathbb R$ and $\mathbb R^2$ \cite{mctarski} but $\mathcal L_\Box$ can \cite{KS,Lucero-Bryan13}.

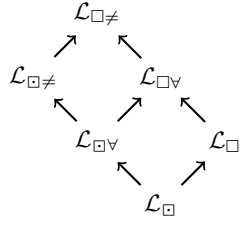
\begin{figure}

\begin{center}

	\begin{tikzpicture}[thick,->,auto,font=\small,node distance=1.2cm]
	%, $\loghomeo$, $\logexp$, $\logrealax$, $\logrnax$, $\logrnfs$, and $\logpers$
	\node[] (logbasic) {$\mathcal L_{\boxdot}$};
	\node[] (logrnax) 	[above left of=logbasic] {$\mathcal L_{\boxdot\forall}$};	
%	\node[] (logrealax) [above right of=logrnax] {};	
	\node[] (logexp) [above left of=logrnax] {$\mathcal L_{\boxdot\neq}$};
	\node[] (logrnfs)  [above right of=logrnax] {$\mathcal L_{\Box\forall}$};		
	\node[] (loghomeo)  [below right of=logrnfs] {$\mathcal L_\Box$};	
	\node[] (logpers) [above left of=logrnfs] {$\mathcal L_{\Box\neq}$};
	
	\path[] 
	(logbasic) edge[] node[pos=0.5,right,font=\scriptsize]{}(loghomeo)
	(logbasic) edge[] node[pos=0.5,left,font=\scriptsize]{}(logrnax)	
	(loghomeo) edge[] node[pos=0.5,right,font=\scriptsize]{}(logrnfs)		
	(logexp) edge[] node[pos=0.5,right,font=\scriptsize]{}(logpers)	
%	(logrealax) edge[] node[pos=0.5,right,font=\scriptsize]{}(logrnfs)		
	(logrnax) edge[] node[right,right,pos=0.5,font=\scriptsize]{}(logexp)
	(logrnax) edge[] node[right,pos=0.5,font=\scriptsize]{}(logrnfs)		
	(logrnfs) edge[] node[right,pos=0.5,font=\scriptsize]{}(logpers)	
	;								
	\end{tikzpicture}

\end{center}	
	\caption{Order relations between different spatial languages: an arrow $\mathcal L\to\mathcal L'$ indicates that $\mathcal L < \mathcal L'$.
	Our main result is that $\mathcal L_{\Box}\not\leq\mathcal L_{\boxdot\neq}$, thus establishing that the diagram is complete: all reductions are already indicated in the figure.}
	\label{figLanguages}
\end{figure}

Specifically, the $\mathcal L_\Box$-logics of $\mathbb R$ and $\mathbb R^2$ differ because $\mathcal L_\Box$ contains a formula valid on {\em locally $1$-component} spaces.
If $X$ is a topological space, $x\in X$ is locally $1$-component if for every neighborhood $U$ of $x$ there is a neighborhood $N \subseteq U$ of $x$ such that $N\setminus \{x\}$ is connected.
The space $X$ is locally $1$-component if every point of $X$ is locally $1$-component.
It is well known and easy to see that $\mathbb R^2$ is locally $1$-component, but $\mathbb R$ is not.
(In fact, the distinction between the circle and the line noted by \cite{KS} is a global version of this property: after removing a point, the circle remains connected, but the line does not.)

It is well known that locally $1$-component spaces validate the Kuratowski formula
\[{\rm Kur}:= \Box(\boxdot p\vee\boxdot \neg p) \to \Box p \vee\Box \neg p\]
\cite{Kuratowski22}, but since this is an important element in our own results, we provide a proof.

\begin{lemma}\label{lemKurAx}
Let $X$ be any topological space and $x\in X$.
Then, if $x$ satisfies local $1$-componency, it follows that $(X,x) \models \rm Kur$.
\end{lemma}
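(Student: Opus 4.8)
The plan is to unfold the topological semantics so that the Kuratowski formula becomes a purely topological statement about the region $P := \val p$, and then to extract the conclusion from a single dichotomy forced by connectedness.

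First I would fix an arbitrary valuation $\val\cdot$ on $X$ and set $P := \val p$. Unravelling Definition \ref{d-semantics}, we have $\val{\boxdot p} = {\bm i}P$ and $\val{\boxdot \neg p} = {\bm i}(X\setminus P)$, so $\val{\boxdot p\vee\boxdot\neg p} = {\bm i}P\cup{\bm i}(X\setminus P)$. Assuming $(X,x)\models\Box(\boxdot p\vee\boxdot\neg p)$, the semantics of $\Box$ via the punctured interior ${\bm p}$ yields an open set $U$ with $x\in U$ and $U\setminus\{x\}\subseteq {\bm i}P\cup{\bm i}(X\setminus P)$. The target $(X,x)\models\Box p\vee\Box\neg p$ amounts to producing an open neighborhood of $x$ whose puncture lies entirely in $P$ or entirely in $X\setminus P$.

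The crucial observation is that $A:={\bm i}P$ and $B:={\bm i}(X\setminus P)$ are \emph{disjoint} open sets, since ${\bm i}P\subseteq P$ and ${\bm i}(X\setminus P)\subseteq X\setminus P$. I would then invoke local $1$-componency at $x$ applied to $U$: there is a neighborhood $N\subseteq U$ of $x$ with $N\setminus\{x\}$ connected. Since $N\setminus\{x\}\subseteq U\setminus\{x\}\subseteq A\cup B$ with $A,B$ disjoint and open, connectedness forces $N\setminus\{x\}\subseteq A$ or $N\setminus\{x\}\subseteq B$.

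In the first case $N\setminus\{x\}\subseteq{\bm i}P\subseteq P$, so $N\subseteq P\cup\{x\}$; passing to the open set ${\rm int}(N)$, which contains $x$ and is still contained in $P\cup\{x\}$, gives $x\in{\bm p}P=\val{\Box p}$. The second case symmetrically yields $x\in\val{\Box\neg p}$, which completes the argument. The one genuinely essential step is this connectedness dichotomy, which is where local $1$-componency does all the work; the remaining points---replacing an arbitrary neighborhood by its interior, and the degenerate case $N\setminus\{x\}=\varnothing$ (i.e.\ $x$ isolated, in which case both disjuncts hold trivially)---are routine.
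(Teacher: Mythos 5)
Your proof is correct and follows essentially the same route as the paper's: obtain a punctured neighborhood inside $\val{\boxdot p}\cup\val{\boxdot\neg p}$, shrink it via local $1$-componency to one with connected puncture, and use the disjointness and openness of $\val{\boxdot p}$ and $\val{\boxdot\neg p}$ to force the dichotomy. Your additional care about passing to ${\rm int}(N)$ and the isolated-point case only makes explicit what the paper leaves implicit.
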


\begin{proof}
Assume that $x$ satisfies local $1$-componency.
To show that $(X,x) \models \rm Kur$, assume moreover that $\val\cdot$ is a valuation such that $x\in \val{\Box(\boxdot p\vee\boxdot \neg p)}$.
Then, $x$ has a neighborhood $U$ such that $U\setminus \{x\}\subseteq \val{ \boxdot p\vee\boxdot \neg p }$; by local $1$-componency, we may assume that $U\setminus \{x\}$ is connected (otherwise, choose a suitable $U'\subseteq U$).
Then, $U \setminus \{x\} \subseteq \val{\boxdot p} \cup  \val{\boxdot \neg p}$.
Since these two sets are open and disjoint and $U\setminus \{x\}$ is connected, we either have that $U \setminus \{x\} \subseteq \val{\boxdot p}$ and $x\in\val{\Box p}$, or $U \setminus \{x\} \subseteq \val{\boxdot \neg p}$ and $x\in\val{\Box \neg p}$. 
Either way, $x\in \val{\Box p\vee\Box \neg p}$.
\end{proof}

Kudinov and Shehtman~\cite{KS} point out that $\mathcal L_{\boxdot\neq} \leq \mathcal L_{\Box \neq }$, but leave open whether $\mathcal L_{\boxdot\neq}< \mathcal L_{\Box \neq }$.
One strategy that comes to mind is to show that the class of spaces validating $\rm Kur$ is not definable in $\mathcal L_{\boxdot \neq }$.
However, as we will see, this idea will not quite work.

\section{Definability of the Kuratowski formula}\label{secDefKur}

In this section we exhibit a formula $\varkur$ which defines the same class of topological spaces as $\rm Kur$, thus showing that $\rm Kur$ itself does not suffice to show that $\mathcal L_{\boxdot\neq}< \mathcal L_{\Box \neq }$.
To this end, we introduce the abbreviation $[\varphi]\psi:= \boxdot(\varphi\rightarrow \psi)$, and set
\[\varkur : = \big ( \neg q \wedge [\neq] q \wedge  [q](\boxdot p \vee\boxdot\neg p) \big ) \rightarrow [q] p\vee [q] \neg p.\]

\begin{theorem}
For every topological space $X$, $X\models {\rm Kur}$ if and only if $X\models\varkur$.
\end{theorem}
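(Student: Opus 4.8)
The plan is to isolate one semantic observation and then run it symmetrically in both directions. The conjunction $\neg q \wedge [\neq]q$ holds at a point $x$ precisely when $\val q = X\setminus\{x\}$: indeed $\neg q$ gives $x\notin\val q$, while $[\neq]q$ gives $\val q\cup\{x\}=X$, so together $\val q = X\setminus\{x\}$. The whole design of $\varkur$ rests on the fact that, once $q$ is used to \emph{name} the singleton $\{x\}$ in this way, the abbreviation $[q]\psi = \boxdot(q\to\psi)$ evaluated at $x$ behaves exactly like $\Box\psi$. I would therefore first prove the following key fact: if $\val q = X\setminus\{x\}$, then for every formula $\psi$ we have $x\in\val{[q]\psi}$ if and only if $x\in\val{\Box\psi}$. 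The computation is immediate from the definitions, since $\val{q\to\psi} = (X\setminus\val q)\cup\val\psi = \{x\}\cup\val\psi$, so $x\in\val{[q]\psi} = {\bm i}(\{x\}\cup\val\psi)$ iff some open $U\ni x$ satisfies $U\subseteq\val\psi\cup\{x\}$; but this is verbatim the defining condition for $x\in{\bm p}\val\psi = \val{\Box\psi}$.

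With this key fact in hand, the forward direction is bookkeeping. Assuming $X\models{\rm Kur}$, I fix an arbitrary valuation and an arbitrary point $x$ satisfying the antecedent of $\varkur$. Its first two conjuncts give $\val q = X\setminus\{x\}$, so by the key fact the third conjunct $x\in\val{[q](\boxdot p\vee\boxdot\neg p)}$ transfers to $x\in\val{\Box(\boxdot p\vee\boxdot\neg p)}$. Since ${\rm Kur}$ is valid on $X$ (and mentions only $p$), this yields $x\in\val{\Box p\vee\Box\neg p}$, and applying the key fact once more pushes this back to $x\in\val{[q]p\vee[q]\neg p}$, which is exactly the consequent of $\varkur$. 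As $x$ and the valuation were arbitrary, $X\models\varkur$.

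For the converse I assume $X\models\varkur$ and must verify ${\rm Kur}$ under an arbitrary valuation at an arbitrary point $x$; suppose the antecedent $x\in\val{\Box(\boxdot p\vee\boxdot\neg p)}$ holds. Here the move is to \emph{introduce} a name for $x$: I extend the given valuation by setting $\val q := X\setminus\{x\}$, which is legitimate precisely because $q$ does not occur in ${\rm Kur}$, so altering $\val q$ leaves the $p$-part untouched. Then the first two conjuncts of the antecedent of $\varkur$ hold by construction and the third holds by the key fact, so validity of $\varkur$ delivers $x\in\val{[q]p\vee[q]\neg p}$, which the key fact converts back to $x\in\val{\Box p\vee\Box\neg p}$, as required.

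I do not anticipate a genuine obstacle: the entire content is the realization that $\neg q\wedge[\neq]q$ lets one name an arbitrary singleton, and that relativizing to the complement of that singleton turns the interior $\boxdot$ of an implication into the punctured interior $\Box$. The only points needing care are that ${\rm Kur}$ contains no occurrence of $q$ (so re-assigning $\val q$ in the converse is harmless and does not change $\val{\rm Kur}$), and that the key fact is genuinely a biconditional, applied once to move the antecedents between the two languages and once to move the consequents.
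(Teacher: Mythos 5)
Your proof is correct and follows essentially the same route as the paper's: both directions rest on the observation that $\neg q\wedge[\neq]q$ forces $\val q = X\setminus\{x\}$, whereupon $[q]\psi$ at $x$ coincides with $\Box\psi$ at $x$ (via ${\bm i}(\val\psi\cup\{x\})$ versus ${\bm p}\val\psi$), with the converse direction introducing a modified valuation $\val q := X\setminus\{x\}$ exactly as the paper does. The only difference is presentational: you factor the equivalence of $[q]$ and $\Box$ into a single explicit key fact, whereas the paper unfolds the same neighborhood argument inline at each occurrence.
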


\begin{proof}
First assume that $X\models {\rm Kur}$, and let $\val\cdot$ be any valuation on $X$.
Let $x\in X$ and assume that $x\in \val{\neg q \wedge [\neq] q \wedge  [q](\boxdot p \vee\boxdot\neg p) }$.
We claim that $x\in \val{\Box(\boxdot p\vee\boxdot \neg p) }$.
To see this, first note that from $x\in \val {\neg q \wedge[\neq] q}$ we obtain that $X\setminus \{x\} =  \val{q}$.
From $x\in \val {[q](\boxdot p \vee\boxdot\neg p)}$ we obtain a neighborhood $U$ of $x$ such that $ U\setminus \{x\}  \subseteq \val {q\rightarrow (\boxdot p \vee\boxdot\neg p) }$.
But $U\setminus \{x\} \subseteq \val q$, so $U\setminus \{x\} \subseteq \val { \boxdot p \vee\boxdot\neg p }$, witnessing that $x\in \val{\Box (\boxdot p \vee\boxdot\neg p )}$.
From $X\models{\rm Kur}$ we see that $x\in \val{ \Box p\vee \Box \neg p}$.
Assume that $x \in \val{\Box p}$.
Letting $V $ be a neighborhood of $x$ such that $V\setminus \{x\} \subseteq \val{p}$, from $X\setminus \{x \} = \val q$ we readily see that $V$ witnesses $x\in \val{[q] p}$, and thus $x\in \val{\varkur}$.
The case where $x \in \val{\Box \neg p}$ is symmetric, except that we obtain $x\in \val{[q]\neg p}$.
Since $x$ and $\val\cdot$ were arbitrary, $X\models\varkur$.

Conversely, assume that $X\models\varkur$.
Let $\val\cdot$ be any valuation on $X$, and assume that $x\in \val {\Box (\boxdot p\vee\boxdot \neg p)}$.
Let $\val\cdot'$ be the valuation identical to $\val\cdot$ except that $\val q' = X\setminus \{x\}$.
It follows that $x\in \val{\neg q \wedge [\neq] q   }'$, and any neighborhood $U$ witnessing $x\in\val{ \Box(\boxdot p \vee\boxdot \neg p)}$ also witnesses $x\in \val{  [q](\boxdot p \vee\boxdot\neg p) }$.
From $X\models\varkur$ we obtain $x\in \val {[q]p\vee [q]\neg p}'$, which by our choice of $\val q'$ readily implies $x\in \val {\Box p\vee \Box\neg p}'$, hence $x\in \val {\Box p\vee \Box\neg p} $ and thus $x\in \val {\rm Kur}
$.
Since $x$ was arbitrary, $X\models\rm Kur$.
\end{proof}

Nevertheless, a mild variant of ${\rm Kur}$ will useful for showing that $\mathcal L_\Box\not\leq \mathcal L_{\boxdot\forall}$.
For this, we first need to exhibit a class of morphisms which preserve formulas with $[\neq]$.

\section{Morphisms for the difference modality}\label{secMorph}

The semantic clauses for $\neq$ are preserved by bijections, in the following sense.
Suppose that $f\colon X\to Y$ is a bijection and $p$ is a variable, and $\val\cdot_X$, $\val\cdot_Y$ are valuations on the respective spaces satisfying $\val p_X = f^{-1}\val p_Y$.
Then, given $x\in X$, $x\in \val {\langle\neq\rangle p}_X$ iff $f(x) \in \val {\langle\neq\rangle p}_Y$.
In general, if $f$ fails to be surjective it is easy to find counterexamples to the latter equivalence.
However, if $f$ is surjective but fails to be injective, the only way to have $x \in \val {\langle\neq\rangle p}_X$ but $f(x)\not \in \val {\langle\neq\rangle p}_Y$ is if $f(x)$ is the {\em only} point of $Y$ satisfying $p$ and, moreover, there is $x'\neq x$ such that $f(x') = f(x)$.
In this case we say that $f(x)$ is $p$-unique (and $x$ is not).
It suffices for $f$ to be injective with respect to unique points for it to preserve the semantic conditions for $\langle\neq\rangle$ (and hence $[\neq]$): below, we make this precise.

\begin{definition}
Let $X$, $Y$ be topological spaces and $U\subseteq Y$.
Say that $f\colon X\to Y$ is {\em $U$-injective} if for each $u\in U$, $f^{-1}(u)$ is a singleton.
If $f$ is interior, surjective and $U$-injective, we say that $f$ is a {\em $U$-morphism.}

Let $  (Y,\val\cdot_ Y)$ be a topo-model, $\Sigma$ a set of formulas.
For a formula $\varphi$, say that $y\in Y$ is {\em $\varphi$-unique} if $\val\varphi = \{y\}$, and {\em $\Sigma$-unique} if it is $\varphi$-unique for some $\varphi\in \Sigma$.
Let ${\rm U}(\Sigma)$ be the set of $\Sigma$-unique points.
We define a {\em $\Sigma$-morphism} to be a ${\rm U}(\Sigma)$-morphism.
\end{definition}

Recall that any function $f\colon X\to Y$ defines a valuation $\val\cdot_X$ on $X$ by setting $\val p_X = f^{-1} \val p_Y$ and extending recursively to complex formulas.

\begin{lemma}
If $\Sigma \subseteq \mathcal L_{\boxdot\neq}$ is closed under subformulas and single negations and $f\colon X\to Y$ is a $\Sigma$-morphism, then for every $\varphi\in 
\Sigma$, $\val\varphi_ X = f^{-1}\val\varphi_ Y$.
\end{lemma}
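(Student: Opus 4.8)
The plan is to proceed by induction on the structure of formulas $\varphi\in\Sigma$, showing $\val\varphi_X = f^{-1}\val\varphi_Y$ at each step. The base case $\varphi = p$ holds by the very definition of the induced valuation $\val\cdot_X$. The Boolean cases are routine and use only that $f$ is a function: for negation, $\val{\neg\psi}_X = X\setminus\val\psi_X = X\setminus f^{-1}\val\psi_Y = f^{-1}(Y\setminus\val\psi_Y) = f^{-1}\val{\neg\psi}_Y$, using the induction hypothesis (valid since $\Sigma$ is closed under single negations); conjunction is handled similarly via closure under subformulas. The $\boxdot$ case is the standard fact that interior maps commute with the interior operator: since $f$ is continuous and open, one checks $f^{-1}{\bm i}_Y A = {\bm i}_X f^{-1} A$ for any $A\subseteq Y$, so from $\val\psi_X = f^{-1}\val\psi_Y$ we get $\val{\boxdot\psi}_X = {\bm i}_X\val\psi_X = {\bm i}_X f^{-1}\val\psi_Y = f^{-1}{\bm i}_Y\val\psi_Y = f^{-1}\val{\boxdot\psi}_Y$.

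The main obstacle, and the only place where $U$-injectivity is needed, is the inductive step for $\langle\neq\rangle$ (equivalently $[\neq]$). Suppose $\varphi = \langle\neq\rangle\psi$ with $\psi\in\Sigma$, and assume the induction hypothesis $\val\psi_X = f^{-1}\val\psi_Y$. I want to show that for every $x\in X$, $x\in\val{\langle\neq\rangle\psi}_X$ iff $f(x)\in\val{\langle\neq\rangle\psi}_Y$. The implication forcing us to use the hypotheses is the direction where surjectivity alone can fail, namely showing that if $f(x)\in\val{\langle\neq\rangle\psi}_Y$ then $x\in\val{\langle\neq\rangle\psi}_X$. So suppose there is $y'\neq f(x)$ with $y'\in\val\psi_Y$. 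By surjectivity, pick $x'\in X$ with $f(x')=y'$; then $x'\in f^{-1}\val\psi_Y = \val\psi_X$, and since $f(x')=y'\neq f(x)$ we have $x'\neq x$, witnessing $x\in\val{\langle\neq\rangle\psi}_X$. For the converse direction, suppose $x\in\val{\langle\neq\rangle\psi}_X$, so there is $x'\neq x$ with $x'\in\val\psi_X = f^{-1}\val\psi_Y$, i.e.\ $f(x')\in\val\psi_Y$. If $f(x')\neq f(x)$ we are done immediately. The delicate case is $f(x')=f(x)$: then this witness does not directly transfer, and we must argue that $f(x)$ cannot be $\psi$-unique. Here is where $U(\Sigma)$-injectivity enters.

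The key observation closing the gap is that $f(x)$ being $\psi$-unique would force $f$ to be injective over $f(x)$. Indeed, $\psi\in\Sigma$, so if $\val\psi_Y=\{f(x)\}$ then $f(x)\in U(\Sigma)$, whence by $U(\Sigma)$-injectivity $f^{-1}(f(x))$ is a singleton; but $x$ and $x'$ both lie in $f^{-1}(f(x))$ with $x\neq x'$, a contradiction. Therefore $f(x)$ is not $\psi$-unique, meaning $\val\psi_Y$ contains some point $y''\neq f(x)$, which witnesses $f(x)\in\val{\langle\neq\rangle\psi}_Y$ as required. This is precisely the content of the informal discussion preceding the lemma: injectivity is only needed at unique points, and the closure of $\Sigma$ under subformulas guarantees that whenever $\langle\neq\rangle\psi\in\Sigma$ we also have $\psi\in\Sigma$, so that $\psi$-uniqueness of $f(x)$ indeed places $f(x)$ in $U(\Sigma)$. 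I expect the bookkeeping around which subformulas lie in $\Sigma$ to be the only real subtlety; the topological content is confined to the routine $\boxdot$ step.
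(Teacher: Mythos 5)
Your proposal is correct and takes essentially the same approach as the paper: induction on formulas with only the difference-modality case being non-routine, where surjectivity yields one direction and $U(\Sigma)$-injectivity at unique points yields the other. The only cosmetic difference is that you work with the dual $\langle\neq\rangle\psi$ and argue by contradiction, whereas the paper works with $[\neq]\varphi$ directly, showing $f(x)$ is $\neg\varphi$-unique --- which is where closure under single negations enters; your dual formulation needs the same closure conditions once $\langle\neq\rangle$ is unfolded as $\neg[\neq]\neg$.
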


\begin{proof}
Induction on formulas, with only the case for $[{\neq}]\varphi$ being non-standard.
If $f(x) \not \in \val{[{\neq}]\varphi}_ Y$, there is $y'\neq f(x)$ such that $y'\not \in \val \varphi_ Y$.
Since $f$ is surjective, $y'=f(x')$ for some $x'\in X$, which since $f$ is a function satisfies $x'\neq x$.
By the IH $x'\not \in \val\varphi_ X$, so $ x\not \in \val{[{\neq}]\varphi}_ X$.

For the other direction, if $f(x)  \in \val{[{\neq}]\varphi}_ Y$, let $x' \not \in \val\varphi_ X $; we must prove that $x ' = x$ to conclude that $x \in \val{[\neq]\varphi}_ X$.
Note that if $y  \in \val{\neg\varphi}_ Y $ it follows that $f(x) = y$, given that $f(x)  \in \val{[{\neq}]\varphi}_ Y$.
By the induction hypothesis, $f(x')\not  \in \val{ \varphi}_ Y$, which by the above yields $f(x) = f(x')\in \val{\neg\varphi}_Y$.
Thus $\val{\neg\varphi}_Y=\{f(x)\}$, and $f(x)$ is $\neg \varphi$-unique.
But, $f $ is $\Sigma$-injective, so $f(x) = f(x')$ yields $x = x'$, as required.
\end{proof}

Note that if $\Sigma$ is finite, there can be only finitely many $\Sigma$-unique points.
This will allow us to give a criterion for when, given {\em any} valuation $\val\cdot_Y$ on $Y$, there is a $\Sigma$-morphism $f\colon X\to Y$.
Below, we write $A \subseteq_{\rm fin} B$ to indicate that $A$ is a finite subset of $B$.

\begin{definition}
If $X,Y$ are topological spaces, we write $X\neqrel  Y$ if for every $U \subseteq_{\rm fin} Y$ there is a surjective, $U$-injective interior map $f\colon X\to Y$.
\end{definition}

It is easy to check that if $\Sigma$ is finite and $X\neqrel  Y$, then for any valuation on $Y$ there is a $\Sigma$-morphism $f\colon X\to Y$.
It follows that if $X\neqrel  Y$, then every $\mathcal L_{\boxdot\neq}$-formula valid on $X$ is valid on $Y$.

\begin{lemma}\label{LemmXggR}
Define
\[ \mathbb X = \{(x,y) \in \mathbb R^2 : |y| \leq |\sin(x)|\}\]
(see Figure \ref{FigX}).
Then, every formula of $\mathcal L_{\boxdot\neq}$ valid on $\mathbb X$ is valid on $\mathbb R$.
\end{lemma}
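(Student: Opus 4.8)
The plan is to establish the relation $\mathbb X \neqrel \mathbb R$; the conclusion then follows immediately from the observation recorded just before the statement, namely that $X \neqrel Y$ implies every $\mathcal L_{\boxdot\neq}$-formula valid on $X$ is valid on $Y$. Unwinding the definition of $\neqrel$, it therefore suffices to show that for every $U \subseteq_{\rm fin} \mathbb R$ there is a surjective, $U$-injective interior map $f \colon \mathbb X \to \mathbb R$.

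The starting point is the vertical projection $\rho \colon \mathbb X \to \mathbb R$ given by $\rho(x,y) = x$. This map is surjective, since $(x,0) \in \mathbb X$ for every $x$ (because $0 \le |\sin x|$), and it is clearly continuous. The crucial point, which I regard as the main obstacle, is to verify that $\rho$ is \emph{open}, and hence interior. Given a relatively open $W \subseteq \mathbb X$ and a point $(x_0,y_0) \in W$, I would show that $\rho(W)$ contains an interval around $x_0$ by producing, for each $x$ near $x_0$, a point of $W$ with first coordinate $x$. There are three cases, according to the local shape of $\mathbb X$ at $(x_0,y_0)$: when $(x_0,y_0)$ lies in the two-dimensional interior $|y_0| < |\sin x_0|$, a full planar ball is available; when it lies on a boundary arc $|y_0| = |\sin x_0| > 0$, the nearby boundary points on the same arc converge to $(x_0,y_0)$ and so eventually lie in $W$; and when $(x_0,y_0) = (n\pi, 0)$ is a pinch point, the spine points $(x,0)$ converge to it and eventually lie in $W$. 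In each case $x \in \rho(W)$ for all $x$ sufficiently close to $x_0$, so $\rho$ is open.

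The projection is far from injective, but it is injective exactly over the pinch points: since $\sin(n\pi) = 0$, the fibre $\rho^{-1}(n\pi) = \{(n\pi,0)\}$ is a singleton. This is what lets me handle an arbitrary finite $U = \{u_1 < \cdots < u_k\}$. I would fix the $k$ pinch points $(\pi,0), (2\pi,0), \dots, (k\pi,0)$ and choose an increasing homeomorphism $h \colon \mathbb R \to \mathbb R$ with $h(i\pi) = u_i$ for $1 \le i \le k$ (such an $h$ exists by piecewise-linear interpolation, extended with positive slope towards $\pm\infty$). Setting $f = h \circ \rho$, the map $f$ is a composition of interior maps, hence interior, and a composition of surjections, hence surjective. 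Finally, $f(x,y) = u_i$ forces $\rho(x,y) = h^{-1}(u_i) = i\pi$ and thus $(x,y) = (i\pi,0)$, so $f^{-1}(u_i) = \{(i\pi,0)\}$ is a singleton for each $u_i \in U$. Hence $f$ is $U$-injective, which establishes $\mathbb X \neqrel \mathbb R$ and completes the proof.
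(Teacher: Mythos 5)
Your proof is correct and follows essentially the same route as the paper: both use the projection $(x,y)\mapsto x$, exploit that its fibres over multiples of $\pi$ are singletons, and handle an arbitrary finite $U$ via a homeomorphism of $\mathbb R$ (the paper normalizes $U$ to multiples of $\pi$ beforehand, you compose with $h$ afterwards --- the same move). You additionally spell out the openness of the projection by the three-case analysis, which the paper dismisses as clear; that verification is sound.
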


\begin{proof}
It suffices to check that $\mathbb X\neqrel  \mathbb R $.
Let $U\subseteq_{\rm fin} \mathbb R$.
Without loss of generality, we may assume that $U$ consists of multiples of $\pi$; otherwise, apply a homeomorphism $g$ to $\mathbb R$ so that $g(U)$ consists of multiples of $\pi$, which is possible since $U$ is finite.
Let $f\colon \mathbb X\to \mathbb R$ be given by $f(x,y) = x$.
Then $f$ is clearly a surjective interior map, and it is $U$-injective because if $x \in U$ then $x$ is a multiple of $\pi$, so $\sin (x) = 0$ and $f^{-1} (x) = \{(x,0)\}$.
It follows that any $\mathcal L_{\Box\neq}$-formula valid on $\mathbb X$ is valid on $\mathbb R$.
\end{proof}

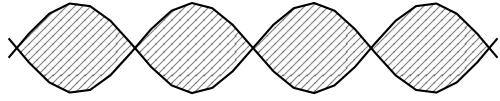
\begin{figure}

\begin{center}

  \begin{tikzpicture}[xscale=0.5,yscale=0.6]
%    \draw[very thin, gray!30, step=0.1 cm](-pi,-1) grid (pi,1);

%    \fill [pattern=north east lines, domain=-2*pi:2*pi, variable=\x]
%      (-2, 0)
%      -- plot ({\x}, {sin(deg(\x))})
%      -- (2, 0)
%      -- cycle;
      
 %       \fill [pattern=north east lines, domain=-2*pi:2*pi, variable=\x]
%      (-2, 0)
 %     -- plot ({\x}, {-sin(deg(\x))})
 %     -- (2, 0)
 %     -- cycle;
 
        \fill [pattern=north east lines, opacity=.7, domain=-2*pi:2*pi, variable=\x]
         plot ({\x}, {-sin(deg(\x))})
      -- plot ({\x}, {sin(deg(\x))})
     -- cycle;

%    \draw [thick] [->] (-pi,0)--(pi,0) node[right, below] {$x$};
%     \foreach \x in {-4,...,4}
%       \draw[xshift=\x cm, thick] (0pt,-1pt)--(0pt,1pt) node[below] {$\x$};

%    \draw [thick] [->] (0,-1)--(0,1) node[above, left] {$y$};
 %    \foreach \y in {-3,...,3}
  %     \draw[yshift=\y cm, thick] (-1pt,0pt)--(1pt,0pt) node[left] {$\y$};

    \draw [thick,domain=-6.5:6.5, variable=\x]
      plot ({\x}, {sin(deg(\x))});  % node[right] at (1.5,2) {$f(x)=x^2$};

    \draw [thick,domain=-6.5:6.5, variable=\x]
      plot ({\x}, {-sin(deg(\x))});  % node[right] at (1.5,2) {$f(x)=x^2$};

  \end{tikzpicture}

\end{center}  
 
\caption{The space $\mathbb X$ looks like the shadow of an infinite braid.}
\label{FigX} 
  
\end{figure}

\begin{remark}
Unlike other notions of topological morphisms, the relation $X\neqrel  Y$ is not witnessed by a single map, but rather, the existence of a suitable map $f_U$ for each finite $U\subseteq Y$.
However, it is possible to gather these maps into a single object $(f_U)_{U\subseteq_{\rm fin} Y}$, and view the latter as a form of topological morphism tailored for logics with $\neq$.
We will call such collections of functions {\em $\neq$-morphisms.}
\end{remark}

\section{A $\mathcal L_{\boxdot\neq}$-undefinable property}\label{sectIndef}

Recall that
$ {\rm Kur} =    \Box(\boxdot p\vee  \boxdot \neg p ) \to (\Box p \vee \Box \neg p) $
is the Kuratowski formula \cite{Kuratowski22}, and is valid on locally $1$-component spaces by Lemma \ref{lemKurAx}.
Note that ${\rm Kur}$ is expressible in $\mathcal L_{\Box}$.
Our space $\mathbb X$ does not validate $\rm Kur$, but it {\em does} validate an approximate version.

\begin{lemma}
$\Box{\rm Kur}$ is valid on $\mathbb X$ but not on $\mathbb R$.
\end{lemma}

\begin{proof}
Let $\val\cdot_\mathbb X$ be any valuation on $\mathbb X$.
To see that it validates $\Box{\rm Kur}$, note that if $(x,y) \in \mathbb X$, then any small-enough neighbourhood $N$ of $(x,y)$ has the property that if $(x', y') \in N\setminus \{(x,y)\}$, then $x'$ is not a multiple of $\pi$.
But it is easy to see that any small-enough punctured ball around $(x', y')$ is connected (see Figure \ref{FigX}), so $(x',y' ) \in \val{\rm Kur} _\mathbb X$, and thus $N$ witnesses that $(x,y) \in \val{\Box{\rm Kur} }_\mathbb X$.

To see that $\Box{\rm Kur}$ is not valid on $\mathbb R$, let $\val\cdot_\mathbb R$ be such that
\[\val p_\mathbb R = \{x\in\mathbb R : \exists n\geq 0 \text{ s.t. } 2^{-2n-1 } <x< 2^{-2n } \} .\]
Then, any punctured neighborhood $U$ of $0$ contains a point of the form $x =  2^{-2n-1 }$ for large-enough $n$.
But $N = (2^{-2n-2 }, 2^{-2n  })$ is a neighborhood of $x$, and all points in $N$ to the right of $x$ satisfy $p$, hence $\boxdot p$, and all points in $N$ to the left of $x$ satisfy $\neg p$, hence $\boxdot\neg p$.
Thus $x\in \val{\Box(\boxdot p \vee \boxdot \neg p)}_\mathbb R$.
However, neither $x\in \val{\Box p}_\mathbb R$ nor $x\in \val{\Box \neg p}_\mathbb R$, so $x\notin \val{\Box {\rm Kur}}_\mathbb R$.
Since $U$ was arbitrary, it follows that $0\notin \val{ \Box {\rm Kur } }_\mathbb R$.
\end{proof}

However, by Lemma \ref{LemmXggR}, we have that any $\mathcal L_{\boxdot\neq}$-formula valid on $\mathbb X$ is also valid on $\mathbb R$, so no $\mathcal L_{\boxdot\neq}$-formula defines the same class of spaces as $ \Box {\rm Kur }$.
We thus obtain the following.

\begin{theorem}
The language $\mathcal L_\Box$ is not reducible to $\mathcal L_{\boxdot\neq}$.
\end{theorem}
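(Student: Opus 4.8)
The plan is to combine the two preceding lemmas into a clean separation argument. The formula $\Box{\rm Kur}$ lives in $\mathcal L_\Box$, and I want to show it is not definable in $\mathcal L_{\boxdot\neq}$; since definability is exactly the tool used in the definition of $\leq$, this immediately yields $\mathcal L_\Box\not\leq\mathcal L_{\boxdot\neq}$. The strategy is to argue by contradiction: suppose some $\mathcal L_{\boxdot\neq}$-formula $\psi$ defined the same class of spaces as $\Box{\rm Kur}$, i.e.\ $\mathcal C(\psi)=\mathcal C(\Box{\rm Kur})$, and derive a contradiction from the fact that $\mathbb X$ and $\mathbb R$ must then lie on the same side of this class.

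First I would invoke the second lemma of Section~\ref{sectIndef}, which tells us that $\Box{\rm Kur}$ is valid on $\mathbb X$ but not on $\mathbb R$. In terms of the $\mathcal C(\cdot)$ notation, this says $\mathbb X\in\mathcal C(\Box{\rm Kur})$ but $\mathbb R\notin\mathcal C(\Box{\rm Kur})$. Next I would invoke Lemma~\ref{LemmXggR}, which establishes $\mathbb X\neqrel\mathbb R$ and hence that every $\mathcal L_{\boxdot\neq}$-formula valid on $\mathbb X$ is also valid on $\mathbb R$. Now if a hypothetical $\psi\in\mathcal L_{\boxdot\neq}$ satisfied $\mathcal C(\psi)=\mathcal C(\Box{\rm Kur})$, then from $\mathbb X\in\mathcal C(\Box{\rm Kur})=\mathcal C(\psi)$ we get $\mathbb X\models\psi$; since $\psi$ is an $\mathcal L_{\boxdot\neq}$-formula valid on $\mathbb X$, Lemma~\ref{LemmXggR} forces $\mathbb R\models\psi$, i.e.\ $\mathbb R\in\mathcal C(\psi)=\mathcal C(\Box{\rm Kur})$. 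This contradicts $\mathbb R\notin\mathcal C(\Box{\rm Kur})$.

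Having ruled out the existence of such a $\psi$, I conclude that $\Box{\rm Kur}$ is not definable in $\mathcal L_{\boxdot\neq}$, and therefore $\mathcal L_\Box\not\leq\mathcal L_{\boxdot\neq}$, which is the assertion of the theorem. I would remark that this simultaneously completes the diagram of Figure~\ref{figLanguages}, as advertised in the introduction: combined with the known non-strict inclusions it pins down all reductions among the six languages.

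I do not expect any genuine obstacle in this final step, since all the mathematical content has been absorbed into the two cited lemmas and into the machinery of $\neq$-morphisms developed in Section~\ref{secMorph}; the theorem is essentially a bookkeeping assembly. The only point requiring a modicum of care is to phrase the conclusion in terms of the precise definitions of \emph{definable} and $\leq$ from Section~\ref{SecRel}: one must check that failure of definability of a single $\mathcal L_\Box$-formula indeed witnesses $\mathcal L_\Box\not\leq\mathcal L_{\boxdot\neq}$, which follows directly because $\leq$ requires \emph{every} formula of the smaller language to be definable. The earlier subtlety — that the Kuratowski formula ${\rm Kur}$ itself could not serve this purpose, since Section~\ref{secDefKur} showed it is already $\mathcal L_{\boxdot\neq}$-definable — is precisely why the passage to the variant $\Box{\rm Kur}$ was needed, and it is worth flagging that the separating example $\mathbb X$ validates the approximate property $\Box{\rm Kur}$ without validating ${\rm Kur}$.
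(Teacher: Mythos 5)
Your proposal is correct and matches the paper's own argument: the paper likewise combines Lemma~\ref{LemmXggR} (every $\mathcal L_{\boxdot\neq}$-formula valid on $\mathbb X$ holds on $\mathbb R$) with the lemma that $\Box{\rm Kur}$ is valid on $\mathbb X$ but not on $\mathbb R$ to conclude that no $\mathcal L_{\boxdot\neq}$-formula defines the same class of spaces as $\Box{\rm Kur}$, whence $\mathcal L_\Box\not\leq\mathcal L_{\boxdot\neq}$. Your explicit contradiction-style phrasing and the check that failure of definability for a single formula suffices for $\not\leq$ are just slightly more detailed renderings of the same assembly step.
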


As a corollary, we obtain that $\mathcal L_{\boxdot\neq}<\mathcal L_{\Box\neq}$.

\section{Concluding remarks}

We have closed the most prominent question left open by \cite{KS} in the comparison of topological modal languages by establishing that $\mathcal L_{\boxdot\neq}<\mathcal L_{\Box\neq}$: in fact, we showed that $\mathcal L_{\Box}\not\leq \mathcal L_{\boxdot\neq}$.
However, our main contribution is arguably a notion of topological morphism which preserves formulas with the difference modality.
Unlike related morphisms for similar languages, preservation of formulas with $[\neq]$ requires not only one map, but a family of maps indexed by the finite subsets of the codomain.

One interesting line of inquiry opened by $\neq$-morphisms is that of {\em succinctness:} Fern\'andez-Duque and Iliev~\cite{FernandezSuccinctness} show that despite being less expressive, $\mathcal L_\boxdot$ is more succinct than $\mathcal L_\Box$ for certain formulas.
Similar results could hold for $\mathcal L_{\boxdot\neq}$ vs.~$\mathcal L_{\Box\neq}$, and would require a refined notion of $\neq$-morphisms.

Finally, we remark that interior maps do not always preserve formulas with $\Box$: more restrictive maps, sometimes called {\em $\bm d$-morphisms,} are needed.
By combining {$\bm d$-morphisms} with $\neq$-morphisms, it would be possible to exhibit classes of spaces which are {\em not} distinguished by $\mathcal L_{\Box\neq}$, thus establishing non-trivial limitations for the expressive power of the full topological modal language.

\section*{Acknowledgements}

I would like to thank Alexandru Baltag and Nick Bezhanishvili for bringing my attention to the difference modality in topological spaces and for some interesting discussions.

%% The file kr.bst is a bibliography style file for BibTeX 0.99c
\bibliographystyle{plain}
\bibliography{biblio}

\end{document}